\newcommand{\G}{\mathcal{G}}
\newcommand{\R}{\mathbb{R}}
\newcommand{\eps}{\varepsilon}
\newcommand{\seq}[1]{\langle #1 \rangle}
\newtheorem{thm}{Theorem}[section]
\newtheorem{que}[thm]{Question}
\newtheorem{lem}[thm]{Lemma}
\newtheorem{prop}[thm]{Proposition}
\newtheorem*{thm-A}{Theorem~\ref{thm-A}}
\newtheorem*{thm-B}{Theorem~\ref{thm-B}}
\newtheorem*{thm-C}{Theorem~\ref{thm-C}}
\newtheorem*{cor-D}{Corollary~\ref{cor-D}}
\newtheorem*{thm-E}{Theorem~\ref{thm-E}}
\newcommand{\Orb}{\mathrm{Orb}}
\DeclareMathOperator{\Int}{int}
\DeclareMathOperator{\diam}{diam}
\numberwithin{equation}{section}
\begin{document}
%

\title{All minimal Cantor systems are slow
}

\author{Jan P. Boro\'nski}
\address[J. P. Boro\'nski]{AGH University of Science and Technology, Faculty of Applied
	Mathematics, al.
	Mickiewicza 30, 30-059 Krak\'ow, Poland}
\email{jboronski@wms.mat.agh.edu.pl}

\address{	-- and -- National Supercomputing Centre IT4Innovations, Division of the University of Ostrava,
	Institute for Research and Applications of Fuzzy Modeling,
	30. dubna 22, 70103 Ostrava,
	Czech Republic}
\email{jan.boronski@osu.cz}
\author{Ji\v{r}\'{\i} Kupka}
\address[J. Kupka]{National Supercomputing Centre IT4Innovations, Division of the University of Ostrava,
	Institute for Research and Applications of Fuzzy Modeling,
	30. dubna 22, 70103 Ostrava,
	Czech Republic}
	\email{jiri.kupka@osu.cz}
\author{Piotr Oprocha}
\address[P. Oprocha]{AGH University of Science and Technology, Faculty of Applied
	Mathematics, al.
	Mickiewicza 30, 30-059 Krak\'ow, Poland
	-- and --
	National Supercomputing Centre IT4Innovations, Division of the University of Ostrava,
	Institute for Research and Applications of Fuzzy Modeling,
	30. dubna 22, 70103 Ostrava,
	Czech Republic}
\email{oprocha@agh.edu.pl}

\begin{abstract}
We show that every (invertible, or noninvertible) minimal Cantor system embeds in $\mathbb{R}$ with vanishing derivative everywhere.
We also study relations between local shrinking and periodic points.
\end{abstract}
\maketitle
\section{Introduction}
A \emph{Cantor set} is a 0-dimensional compact metric space without isolated points, and \emph{Cantor system} is a dynamical system on the Cantor set. A minimal system is the one that has all orbits dense. The present paper is concerned with the following question.
\begin{que}\label{q1}
Can every minimal Cantor system be embedded into $\mathbb{R}$ with vanishing derivative everywhere?
\end{que}
A particular instance of that question was raised by Samuel Petite at the {\em Workshop on Aperiodic Patterns in Crystals, Numbers and Symbols} that took place in Lorentz Center in June of 2017, who asked if expansive minimal Cantor systems have this property. It was conjectured during that meeting that the expansive systems lack such a property, because some kind of expanding must take place in these systems. In contrast, we answer Question \ref{q1} in the affirmative.
\begin{thm}\label{Th:LRS}
	Let $(C,T)$ be a minimal Cantor system.	
	Then there exists
	an embedding $\pi \colon C\to \R$ such that the map $f:\pi (C)\to\pi (C)$ given by $f=\pi \circ T\circ \pi^{-1}$
	has derivative $0$ everywhere.
\end{thm}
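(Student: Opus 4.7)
The plan is to construct $\pi$ hierarchically from a refining sequence of clopen partitions well-adapted to $T$, then read off the vanishing derivative from the resulting geometry of nested intervals. Since $(C,T)$ is minimal on a Cantor set, I would first choose nested Kakutani--Rokhlin clopen partitions $\mathcal{P}_1\prec\mathcal{P}_2\prec\cdots$ with $\mathrm{diam}(\mathcal{P}_n)\to 0$ such that $\mathcal{P}_{n+1}$ refines $T^{-1}\mathcal{P}_n$; consequently every atom $U\in\mathcal{P}_n$ is contained in a unique atom $V(U)\in\mathcal{P}_n$ with $T(U)\subset V(U)$. In the noninvertible case a forward-compatible refinement of this type is still available from minimality and zero-dimensionality, so the construction adapts.

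Next I would inductively assign to each atom $U\in\mathcal{P}_n$ a closed interval $I(U)\subset\R$ satisfying three conditions: \emph{nesting}, $I(V)\subset I(U)$ whenever $V\in\mathcal{P}_{n+1}$ is contained in $U\in\mathcal{P}_n$, with $\mathrm{diam}(I(U))\to 0$ along every decreasing chain of atoms; \emph{separation}, distinct sibling sub-intervals inside $I(U)$ are pairwise separated by a gap of at least $c\cdot\mathrm{diam}(I(U))$ for a universal constant $c>0$; and \emph{shrinking}, $\mathrm{diam}(I(V(U)))\le\varepsilon_n\,\mathrm{diam}(I(U))$ for every $U\in\mathcal{P}_n$, where $\varepsilon_n\to 0$. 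The map $\pi(x):=\bigcap_n I(U_n(x))$, with $U_n(x)\in\mathcal{P}_n$ the atom containing $x$, is then a homeomorphic embedding by the first two conditions.

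To verify that $f=\pi\circ T\circ\pi^{-1}$ has derivative $0$ everywhere on $\pi(C)$, take $x\in C$ and any $\tilde y\in C$ with $\pi(\tilde y)\neq\pi(x)$ close to $\pi(x)$, and let $n$ be the largest index for which $x$ and $\tilde y$ lie in a common atom $U\in\mathcal{P}_n$. Since $T(x),T(\tilde y)\in V(U)$, shrinking gives $|\pi(T\tilde y)-\pi(Tx)|\le\varepsilon_n\,\mathrm{diam}(I(U))$; meanwhile, $x$ and $\tilde y$ already lie in distinct children of $U$ at level $n+1$, so separation yields $|\pi(\tilde y)-\pi(x)|\ge c\,\mathrm{diam}(I(U))$. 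The difference quotient is therefore bounded by $\varepsilon_n/c$, which tends to $0$ as $\pi(\tilde y)\to\pi(x)$ (since this forces $n\to\infty$).

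The main obstacle is realizing the three interval conditions simultaneously and consistently across all refinements. Within a single tower $\{B,TB,\ldots,T^{h-1}B\}$, shrinking forces the interval lengths to decay geometrically along the tower, and bases of higher-indexed towers must be contracted further relative to the tops that feed them, while separation demands that a fixed proportion of every parent interval be reserved for gaps between children. Coordinating these constraints as $\mathcal{P}_n$ is refined to $\mathcal{P}_{n+1}$, with possibly much taller new towers, is the combinatorial bookkeeping that makes the construction delicate, and I would expect this to be the main technical point of the proof.
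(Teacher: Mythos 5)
Your high-level architecture (a refining sequence of clopen partitions adapted to $T$, nested intervals with controlled gaps, numerator controlled by shrinking and denominator by separation) matches the paper's, which uses the Gambaudo--Martens graph-cover representation rather than Kakutani--Rokhlin towers; these are essentially interchangeable. However, your condition (3) --- ``$\diam(I(V(U)))\le\varepsilon_n\,\diam(I(U))$ for \emph{every} $U\in\mathcal{P}_n$'' --- is self-contradictory and cannot be realized. Since $T$ is surjective, the assignment $U\mapsto V(U)$ is a surjection of the finite set $\mathcal{P}_n$ onto itself, hence a bijection, hence a union of cycles; following one cycle $U_0\mapsto U_1\mapsto\cdots\mapsto U_{k-1}\mapsto U_0$ gives $\diam(I(U_0))\le\varepsilon_n^k\,\diam(I(U_0))$ with $\varepsilon_n<1$, which is absurd. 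This is exactly the circularity you brush past when you write that ``bases of higher-indexed towers must be contracted further relative to the tops that feed them'': the tops feed back into the bases (minimality forces a first return), so strict contraction cannot hold all the way around. This is not bookkeeping; it is the central obstruction, and the proposal contains no idea for getting past it. (A secondary, fixable issue: a \emph{universal} separation constant $c$ is impossible once the number of children of an atom is unbounded; one must take $c_n$ depending on the level and then choose the shrinking rates after seeing the next partition, so that $\varepsilon_n/c_n\to 0$.)

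The paper's way around the cycle obstruction is to allow the contraction to fail, but only at the ``return to base'' transition, and then to show this failure is harmless pointwise. Concretely, each level-$m$ graph is equipped with a distinguished set $W_m$ of return vertices, and the remaining vertices are linearly ordered so that the order strictly increases along every path avoiding $W_m$; the interval assigned to the $i$-th vertex has length $2^{-2s_{m+1}^2-is_{m+1}}a_{m-1}$, so intervals shrink drastically along every non-return transition, while at a return the length may jump back up. The key observation is that for a fixed $z$ in the inverse limit, the coordinate $z_m$ lies in $W_m$ for \emph{at most one} $m$ (because $z_m\in W_m$ forces $z_{m-1}$ to be the special vertex $v_{m-1,0}$), so for all sufficiently large levels the transition relevant to the difference quotient at $\pi(z)$ is a non-return one and the desired estimate holds. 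Some device of this kind --- breaking the uniformity of the shrinking requirement and proving that each individual point sees the exceptional transition only finitely often --- is what your proof is missing; without it the three interval conditions cannot be satisfied simultaneously and the construction does not get off the ground.
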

 Note that minimal Cantor systems occur quite naturaly as subsystems of interval maps (see e.g. \cite{blokh2}). Since differentiable maps defined on perfect subsets of $[0,1]$ can be extended to differentiable maps of the interval (see e.g. \cite{Jar}), this gives in particular the following realization theorem.
\begin{thm}\label{thm:1}
Every minimal Cantor system $(C,T)$ can be realized as a minimal subsystem of a differentiable system $([0,1],f)$ such that $f'|_C\equiv 0$.
\end{thm}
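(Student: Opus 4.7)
The plan is to combine Theorem~\ref{Th:LRS} with the differentiable extension result of Jarn\'ik from \cite{Jar}, adding a harmless smooth truncation so that the extended map stays inside $[0,1]$. First, I would invoke Theorem~\ref{Th:LRS} to obtain an embedding $\pi_0\colon C\to\R$ for which the map $f_0 := \pi_0\circ T\circ \pi_0^{-1}\colon \pi_0(C)\to\pi_0(C)$ has derivative zero at every point of $\pi_0(C)$. Composing $\pi_0$ with a suitable affine self-map of $\R$, I may assume that $K := \pi_0(C)$ is a Cantor subset of $[1/4,3/4]$; this rescaling does not affect the vanishing-derivative property. From now on I identify the abstract system $(C,T)$ with its image $(K,f_0)$.

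Second, $K$ is closed in $[0,1]$ and $f_0\colon K\to K$ is everywhere differentiable with vanishing derivative, so Jarn\'ik's extension theorem \cite{Jar} yields a differentiable $F\colon [0,1]\to\R$ with $F|_K = f_0$ and $F'|_K\equiv 0$. A priori $F$ may take values outside $[0,1]$ on the complementary intervals of $K$, so I would correct this by composing with a smooth cutoff. Choose a differentiable map $\varphi\colon\R\to[0,1]$ that restricts to the identity on $[1/4,3/4]$---any mollification of a piecewise-linear plateau works---and set $f := \varphi\circ F\colon [0,1]\to [0,1]$.

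Since $f_0(x)\in K\subset[1/4,3/4]$ for every $x\in K$, one has $f(x)=\varphi(f_0(x))=f_0(x)$, so $K$ is $f$-invariant and $f|_K$ coincides with $f_0$. The chain rule then gives, for $x\in K$,
\[
f'(x) = \varphi'(F(x))\,F'(x) = 1\cdot 0 = 0,
\]
because $\varphi'\equiv 1$ on the interval $[1/4,3/4]$ that contains $F(x)=f_0(x)$. Hence $(K,f|_K)\cong (C,T)$ is a minimal subsystem of the differentiable system $([0,1],f)$ with $f'|_K\equiv 0$, which is exactly the assertion.

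I do not expect a genuine obstacle beyond what is already packaged into Theorem~\ref{Th:LRS}; the rest is bookkeeping. The only point to verify carefully is that the form of \cite{Jar} being invoked really produces a pointwise-differentiable extension of $f_0$ matching both values and derivatives on $K$, which is Jarn\'ik's classical theorem on differentiable extensions from closed subsets of $\R$.
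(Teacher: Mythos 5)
Your proposal is correct and follows essentially the same route as the paper: the paper derives Theorem~\ref{thm:1} from Theorem~\ref{Th:LRS} precisely by invoking the differentiable extension theorem for closed subsets of the line cited as \cite{Jar}. Your additional steps (the affine rescaling into $[1/4,3/4]$ and the smooth cutoff to keep the extension inside $[0,1]$) are just the bookkeeping the paper leaves implicit, and they are handled correctly.
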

For expansive systems, which are usually connected with hyperbolic dynamics, this result is counter-intuitive. This is because in such systems any two distinct points have to eventually separate to some positive distance.
On the other hand one may think that because of vanishing derivative, points have to be attracted to each other for arbitrarily long time. This is not true however, because
only points sufficiently close to each other are attracted (say, at distance $\eps_x$ from $x$), while in practice $\eps_{T^n(x)}$ decrease much faster
than the distance $d(T^n(x),T^n(y))$, and so eventually these points can separate.

There are more reasons for which the above result seems surprising. By the Margulis-Ruelle inequality the topological entropy of a piecewise Lipschitz differentiable map $f$, with an invariant measure $\mu$, is bounded from above by the integral over the support of $\mu$ of the Lyapunov characteristic of $f$. In the case of derivative zero, all Lyapunov exponents, and as a result
 Lyapunov characteristic of $f$ are all equal to $0$.
Therefore it is  natural to expect that vanishing derivative on an invariant set will imply zero entropy on that set. Such an intuition was supported by the zero entropy examples in \cite{BKO}. However the main result in this paper shows that no such connection exists. Let us recall plethora of surprising minimal examples that can be observed on Cantor set. All starts with theorem of Jewett and Krieger, and constructive examples in a series of papers of Grillenberger, cf. \cite{Gril}. Further results made classes of systems and their dynamical properties more specific.
For example, there exist topologically weakly mixing minimal Cantor systems with arbitrary entropy in $[0,\infty]$, because it was later proven that every aperiodic ergodic system has a topologically mixing strictly ergodic model \cite{Lehrer}. The class of minimal Cantor systems also contains non-uniquely ergodic systems, since any possible simplex of measures is realizable by a Cantor minimal system. This can be done even in the class of Toeplitz minimal systems (see \cite{Downar}, cf. \cite{D1}), which are almost 1-1 extensions of odometers. Furthermore, there are examples with several measures of maximal entropy (see \cite{Downar}, cf. \cite{DS}). At the opposite end, there are examples of pure point systems (i.e. measurable conjugate to rotation over a group), e.g. see \cite{DL}. As we can see, most of known dynamics can be observed in minimal systems on the Cantor set, and this can hardly be connected with derivative zero.
Note that if $f$ is $C^1$ on $\R$ and $f(P)\subset P$ for some perfect compact subset $P\subset \R$, then there is $x\in P$ with  $|f'(x)|\geq 1$ (e.g. see Lemma~3.3 in \cite{CJREA}).
For systems with positive entropy it is also a consequence of Margulis-Ruelle inequality mentioned above, so in this case the map is not even Lipschitz continuous. This give raise to the following question.
\begin{que}
Can the map $f$ in Theorem~\ref{thm:1} be additionally required to be $\alpha$-H\"older continuous for some $0<\alpha<1$?
\end{que}

Theorem \ref{Th:LRS} generalizes an earlier result of the present authors, who in \cite{BKO} showed that all odometers can be embedded into $\mathbb{R}$ with vanishing derivative everywhere. The first result of that kind, using very different methods, was achieved by Ciesielski and Jasi\'nski in \cite{CJ} for the 2-adic odometer. Surjective dynamical systems with vanishing derivative everywhere constitute a subclass of a larger family of systems that are \textit{locally radially shrinking} (\textit{l.r.s.} for short); i.e. surjective dynamical systems $(X,f)$ such that
\begin{itemize}
	\item[(LRS)] for every $x\in X$ there exists an $\epsilon_x>0$ such that $d(x,y)<\epsilon_x$ implies $d(f(x),f(y))<d(x,y)$ for all $y\neq x$.
\end{itemize}
It was shown in \cite{CJ} that each infinite l.r.s. system contains an infinite minimal subsystem, and that for any $n$ the set of $n$-periodic points
is finite. The natural question arises, whether the set of all periodic points can be infinite. Our next result answers this question in the affirmative, with a class of $0$-dimensional systems. These systems are obtained from Cantor systems by adding some trajectories, but are not Cantor systems themselves (the set $Z$ below has isolated points).
\begin{thm}\label{infinite}
	Every Cantor minimal l.r.s. system $(C,T)$ can be extended to a non-transitive l.r.s. system $(Z,F)$, such that the set of periods of $(Z,F)$ is unbounded.
\end{thm}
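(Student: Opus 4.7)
The plan is to extend $(C,T)$ by adjoining countably many pairwise disjoint, isolated $F$-periodic orbits of growing periods $n_k\to\infty$, each shadowing a long $T$-orbit segment with small ``closing error.'' By minimality and recurrence of $T$, pick inductively base points $x_k\in C$ lying on pairwise distinct $T$-orbits (possible since $C$ is uncountable while each orbit is countable) and integers $n_k\to\infty$ so that $\eps_k:=d_C(T^{n_k}(x_k),x_k)$ decays very fast. Let $Z:=C\sqcup\bigsqcup_{k\ge 1}O_k$ with $O_k:=\{p_k^{(0)},\ldots,p_k^{(n_k-1)}\}$ new isolated points, and define $F|_C:=T$ and $F(p_k^{(i)}):=p_k^{(i+1\bmod n_k)}$, so that each $O_k$ is an $F$-periodic orbit of period $n_k$.

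Topologize $Z$ by embedding it in $C\times[0,1]$ via $x\mapsto(x,0)$ for $x\in C$ and $p_k^{(i)}\mapsto(T^i(x_k),\rho_k^{(i)})$, for positive weights $\rho_k^{(i)}$ to be chosen (tending to $0$ uniformly in $i$ as $k\to\infty$), with the $\ell^1$-product metric $d_Z$. Then $Z$ is compact, each $p_k^{(i)}$ is isolated, and $C$ is a Cantor subset. Continuity of $F$ follows: at an accumulation $p_{k_m}^{(i_m)}\to x\in C$ one has $T^{i_m}(x_{k_m})\to x$, and $F(p_{k_m}^{(i_m)})=p_{k_m}^{(i_m+1\bmod n_{k_m})}\to T(x)$ by continuity of $T$ (when $i_m+1<n_{k_m}$) or by $\eps_{k_m}\to 0$ (in the wrap-around case $i_m+1=n_{k_m}$). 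Non-transitivity is automatic since $F(C)=C$ and each $O_k$ is $F$-invariant; the set of periods of $F$ contains $\{n_k\}_{k}$ and is hence unbounded.

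The main step is verifying the (LRS) condition. At any isolated $p_k^{(i)}$ it is vacuous. At $x\in C$ the $y\in C$ case is the (LRS) of $(C,T)$; for $y=p_k^{(i)}$, after substitution the condition reads
\[
d_C\bigl(T(x),T^{(i+1)\bmod n_k}(x_k)\bigr)+\rho_k^{((i+1)\bmod n_k)}<d_C\bigl(x,T^i(x_k)\bigr)+\rho_k^{(i)}.
\]
For non-closing edges $i<n_k-1$, the l.r.s.\ of $T$ handles the $d_C$-part and one requires only $\rho_k^{(i+1)}\le\rho_k^{(i)}$, strict precisely at coincidences $T^i(x_k)=x$. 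For the closing edge $i=n_k-1$, the triangle inequality together with $d_C(T^{n_k}(x_k),x_k)<\eps_k$ yields $\rho_k^{(0)}+\eps_k<\rho_k^{(n_k-1)}$.

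These two families of conditions are cyclically inconsistent around each $O_k$, and I expect this to be the main obstacle. The resolution rests on two observations. First, disjointness of the $x_k$'s orbits implies that any $x\in C$ is a coincidence ($x=T^i(x_k)$) for at most one pair $(k,i)$; choosing $\eps_x$ smaller than the weight $\rho_k^{(i)}$ of that single coincidence excises the one problematic point from $B_{d_Z}(x,\eps_x)$. Second, for the surviving non-coincidence $p_k^{(i)}\in B_{d_Z}(x,\eps_x)$, the l.r.s.\ of $T$ is \emph{strictly} contracting on the $d_C$-part; after using Theorem~\ref{Th:LRS} to replace, if necessary, the metric on $C$ with one witnessing derivative zero, the contraction becomes quantitatively near-total at small scales, producing slack to absorb both the weight variation along each cycle and the closing error, provided the weights $\rho_k^{(i)}$ are chosen so that $\rho_k^{(n_k-1)}-\rho_k^{(0)}\gg\eps_k$ (for instance by linear interpolation, with $\eps_k$ taken to decay much faster than the prescribed weight gap). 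Carefully balancing the $\rho_k^{(i)}$'s against the $\eps_k$'s completes the verification of (LRS).
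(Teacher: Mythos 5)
Your overall architecture (adjoin isolated near-periodic orbits at small heights above $C$, each shadowing an orbit segment with a small closing error) is parallel to the paper's, and you have correctly located the central obstacle: the monotonicity constraints on the weights $\rho_k^{(i)}$ are cyclically inconsistent. But the resolution you propose does not close the gap, and the failure is exactly where you place the burden. Your coincidence analysis only excises the points $x$ with $x=T^i(x_k)$ \emph{exactly}; it does nothing about the $x\in C$ that merely accumulate at $T^i(x_k)$, and such $x$ always exist at every scale since $C$ has no isolated points. For such an $x$ the absolute gain supplied by the l.r.s.\ property of $T$ --- even in a metric furnished by Theorem~\ref{Th:LRS} --- is at most $d_C(x,T^i(x_k))$, because $d_C(T(x),T^{i+1}(x_k))\geq 0$: a contraction \emph{ratio} tending to $0$ still yields only a gain proportional to $d_C(x,T^i(x_k))$. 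Hence no choice of metric can absorb a fixed positive increment $\Delta_k^{(i)}=\rho_k^{(i+1)}-\rho_k^{(i)}>0$ uniformly: every $x$ with $0<d_C(x,T^i(x_k))<\tfrac12\Delta_k^{(i)}$ satisfies $d_Z(F(x),F(p_k^{(i)}))\geq\rho_k^{(i)}+\Delta_k^{(i)}>d_Z(x,p_k^{(i)})$, so the pair $(x,p_k^{(i)})$ violates (LRS) at scale at most $\rho_k^{(i)}+\tfrac12\Delta_k^{(i)}$, which tends to $0$ with $k$. With your suggested linear interpolation \emph{every} edge carries the increment $\Delta_k>0$, so the bad set at level $k$ contains a punctured $\tfrac12\Delta_k$-neighbourhood of the entire segment $\{T^i(x_k)\}_{i<n_k-1}$; by minimality these segments become dense as $n_k\to\infty$, so each union $\bigcup_{k\geq N}(\text{bad sets})$ is open and dense, and by Baire a dense $G_\delta$ of points $x$ admits violating pairs at arbitrarily small distance. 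At every such $x$ condition (LRS) genuinely fails for any choice of $\epsilon_x$.

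The paper's construction is designed precisely to dodge this. All cycles shadow backward orbit segments of a \emph{single} point $z$ returning to nested clopen sets $U_n$, the heights \emph{strictly decrease} along each cycle (so every non-closing edge, exact coincidences included, is handled by the height drop alone), and the only upward jump, of size $(a_n+a_{n+1})/2<a_n$, occurs at the closing edge, whose source and target project to points of $U_n\setminus U_{n+1}$, a compact set bounded away from $z$. On that set one has the \emph{uniform additive} gain $d(z,y)-d(T(z),T(y))>a_n$ (inequality \eqref{eq:lrcest}, obtained by compactness), and the return times are chosen with $d(z,T^{-s_n}(z))>d(z,T^{-k_n}(z))$ so that the base coordinate also gains at the closing edge; for $y\neq z$ the closing points are simply pushed out of the neighbourhood $V_y\times[-1,-1+a_m/2]$. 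To salvage your version you would have to do something equivalent: confine all weight increase to edges whose base points stay in a region uniformly separated from the points of $C$ that can see them at small scale, and replace the multiplicative derivative-zero estimate by an additive lower bound on the contraction gain valid on such a compact region. As written, the proposal is missing this idea, and the step ``the contraction produces slack to absorb the weight variation along each cycle'' is false.
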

Let us finish this section with the following observations. First, Question \ref{q1} is not particularly interesting for transitive dynamical systems that are not periodic point free. Suppose that $(X,T)$ is a l.r.s. system and $p\in X$ is a periodic point. Then it is easy to see that there exists an open set $U\ni p$ such that $\omega_T(x)=\Orb(p)$ for every $x\in U$. This immediately implies that if a transitive l.r.s. system has periodic point $p$ then $X=\Orb(p)$. Second, in \cite{BKO} there were constructed a nonminimal (hence infinite) transitive l.r.s. system, and an l.r.s. system with an attractor-repellor pair. In a sense the construction of the attractor-repellor pair from \cite{BKO} is the simplest possible and cannot be much improved  by the observation below. Namely, if we have an attractor-repellor pair in an l.r.s. system, then the attractor can be even a fixed point (see e.g. Theorem 5.3 in \cite{BKO}) but the repellor must be an infinite set (must be a Cantor minimal set or larger).
Precisely, let $A_n=\{y : T^i(y)=x \text{ for some } i\geq n\}\neq \emptyset$ and let $\alpha(x)$ be a generalized $\alpha$-limit set of $x$; i.e.
	$\alpha(x)=\bigcap_n \overline{A_n}$. Note that $\alpha(x)$ is closed, invariant and nonempty.
	
\begin{prop}
	Suppose that $(X,T)$ is an l.r.s. system and $x\in X$. If $x$ is not periodic, then $\alpha(x)$ contains no periodic orbits. In particular, $\alpha(x)$ contains an infinite minimal subsystem.
\end{prop}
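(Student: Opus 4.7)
The strategy is proof by contradiction: assume $\alpha(x)$ contains a periodic point $p$ of period $k$, and deduce $x\in\Orb(p)$, contradicting the hypothesis that $x$ is not periodic. The key ingredient is a \emph{uniform} application of the l.r.s. property along the finite orbit of $p$, which is available precisely because that orbit is finite.

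Unpacking $\alpha(x)=\bigcap_n\overline{A_n}$, one extracts a sequence $y_j\to p$ with $T^{i_j}(y_j)=x$ and $i_j\to\infty$. Set $\eps:=\min_{0\le i<k}\eps_{T^i(p)}>0$ and take $j$ large enough that $d(y_j,p)<\eps$. A routine induction that at each step applies (LRS) at $T^i(p)$ yields
\[
d(T^n(y_j),T^n(p))\le d(y_j,p)<\eps \qquad \text{for all } n\ge 0.
\]
(The inequality is non-strict only to accommodate the degenerate case $T^n(y_j)=T^n(p)$; should this ever occur, then $x=T^{i_j}(y_j)\in\Orb(p)$ directly.) Specializing to $n=i_j$ gives $\dist(x,\Orb(p))\le d(x,T^{i_j}(p))\le d(y_j,p)\to 0$, and since $\Orb(p)$ is finite, $x\in\Orb(p)$, the desired contradiction.

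For the ``in particular'' clause, I would first verify that $\alpha(x)$ is nonempty, closed, and forward $T$-invariant: if $y=\lim y_k$ with $y_k\in A_{n+1}$, then continuity of $T$ places $T(y_k)\in A_n$ arbitrarily close to $T(y)$, so $T(\alpha(x))\subseteq\alpha(x)$. A standard Zorn's lemma argument then produces a minimal subsystem $M\subseteq\alpha(x)$, and since a finite minimal system consists of a single periodic orbit, the first part of the proposition forces $M$ to be infinite. The main bookkeeping obstacle is selecting a common $\eps$ that works simultaneously at every iterate of $p$ and cleanly disposing of the degenerate collision case; both are handled above, and once those are in place the conclusion is essentially immediate.
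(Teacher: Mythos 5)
Your proof is correct and follows essentially the same route as the paper's: both argue by contradiction that a periodic point $p\in\alpha(x)$ would attract the preimages of $x$ accumulating on it, forcing $x$ into (a neighborhood of) $\Orb(p)$ and contradicting non-periodicity of $x$. The only difference is presentational: the paper reduces to the fixed-point case and builds a forward-invariant open neighborhood $V\ni q$ with $x\notin V$, whereas you run the distance-decreasing induction explicitly with $\eps=\min_{0\le i<k}\eps_{T^i(p)}$, which has the minor merit of handling period $k>1$ without any reduction.
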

\begin{proof}
 We claim that if $q\in \alpha(x)$ then $q$ is not a periodic point. Suppose to the contrary, that it is not the case and $q$ is periodic. Clearly $q\neq x$ as $x$ is not periodic. For simplicity we can assume that $q$ is fixed.
Therefore,  by the fact that $(X,T)$ is l.r.s., there is an open set $V\ni q$
	such that $x\not\in V$ and $T(V)\subset V$. Denote $A_n=\{y : T^i(y)=x \text{ for some } i\geq n\}$. Then $A_n\cap V\neq \emptyset$ for any $n$, and therefore $x\in V$ which is a contradiction proving the claim. On the other hand, $\alpha(x)$ contains a minimal set which must be infinite, since it is not a periodic orbit.
\end{proof}

\section{Preliminaries}
In this section we recall notions from the theory of graph covers, that we are going to use in the proof of Theorem \ref{Th:LRS}. For more on graph covers, and various recent applications see e.g. \cite{Good,GambaudoMartens06,Shim2,Shi,ShimomuraMinimal}.
\subsection{Graph covers}
A \textit{graph} is defined as a pair $G=(V,E)$ of finite sets, where \emph{vertices} are represented by elements of the set $V$ and \emph{edges} of the graph $G$ are represented by elements of $E\subseteq V\times V$. We say that the graph $G$ is \emph{edge surjective} if every vertex of $G$ has an incoming and outgoing edge. This means that for every $v\in V$ there exist $u,w\in V$ such that $(u,v),(v,w) \in E$. For two graphs $(V_1,E_1)$ and  $(V_2,E_2)$ a map $\phi \colon V_1\to V_2$ defines a \textit{homomorphism} if $\phi$ is edge-preserving, i.e.  for each $(u,v)\in E_1$ we obtain $(\phi(u),\phi(v))\in E_2$. For simplicity of notation, homeomorphism of graphs is denoted by $\phi \colon (V_1,E_1)\to (V_2,E_2)$. A graph homomorphism $\phi$ is called \textit{bidirectional} if $(u,v),(u,v')\in E_1$ implies $\phi(v)=\phi(v')$ and
$(w,u),(w',u)\in E_1$ implies $\phi(w)=\phi(w')$. A bidirectional homomorphism between edge-surjective graphs is called a \emph{bd-cover}. Given a sequence $\G=\seq{\phi_i}_{i=0}^\infty$  of bd-covers $\phi_i \colon (V_{i+1},E_{i+1})\to (V_i,E_i)$ we denote by $V_\G$ the space given by
$$
V_\G=\varprojlim(V_i,\phi_i)=\{ x\in \Pi_{i=0}^\infty V_i : \phi_i(x_{i+1})=x_i \text{ for all }i\geq 0\}
,$$
i.e. $V_\G$ is the inverse limit of the sequence $\G$. We set $\phi_{m,n}=\phi_n\circ \phi_{n+1}\circ \ldots \circ \phi_{m-1}$,  and by $\phi_{\infty,n}$ we denote the projection from $V_\G$ onto $V_n$.
We let
$$
E_\G=\{e\in V_\G\times V_\G : e_i\in E_i \text{ for every }i=0,1,2,\dots \} .
$$
$V_i$ is endowed with the discrete topology and the space $V_\G=\prod_{i=0}^\infty V_i$
is endowed with the product topology, that is equivalent to the metric topology given by $d(x,y)=0$ when $x=y$ and
$d(x,y)=2^{-k}$
provided $x\neq y$ and $k=\min \{i : x_i\neq y_i\}$.

Given a graph $G$ by $V(G)$ and $E(G)$ we denote respectively the set of vertices and edges of $G$. A \emph{path} $v_1,v_2,v_3, \ldots ,v_{l-1},v_{l}$ in a graph $G$ is a finite subgraph of $G$ given by edges
$$(v_1,v_2),(v_2,v_3), \ldots, (v_{l-2},v_{l-1}),(v_{l-1},v_{l}).$$
Given a path $\eta$, by $E(\eta)$ we denote the set of edges
in $\eta$. A \emph{cycle} in $G$ is a path that starts and ends at the same vertex. Given cycles $c_1,\ldots, c_n$ that start at the same vertex $v$, by $a_1 c_1+\ldots +a_n c_n$ we denote
the cycle starting and ending at the vertex $v$, obtained by passing $a_1$ times cycle $c_1$, then $a_2$ times cycle $c_2$, and so on. We shall need the following result from \cite[Lemma 3.5]{Shim2}.
\begin{lem}\label{lem:Tg}
	Let $\G=\seq{\phi_i}$ be a sequence of bd-covers $\phi_i \colon (V_{i+1},E_{i+1})\to (V_i,E_i)$.
	Then $V_\G$ is a zero-dimensional compact metric space and the relation $E_\G$ defines a homeomorphism.
\end{lem}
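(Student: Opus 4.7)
The plan is to verify the topological properties of $V_\G$ by standard inverse-limit arguments, and then to show that $E_\G$ is the graph of a continuous bijection $T\colon V_\G\to V_\G$ with continuous inverse. Each $V_i$ is finite, hence compact, metrizable, and zero-dimensional in the discrete topology; the product $\prod_{i=0}^\infty V_i$ inherits all three properties (Tychonoff, countability, clopen box basis), and $V_\G$ is closed in the product since it is cut out by the clopen conditions $\phi_i(x_{i+1})=x_i$.

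For the second part I would first prove that for each $x\in V_\G$ there is a unique $y\in V_\G$ with $(x,y)\in E_\G$. Uniqueness uses the forward half of bidirectionality: if $(x,y),(x,y')\in E_\G$, then $(x_{i+1},y_{i+1}),(x_{i+1},y'_{i+1})\in E_{i+1}$ forces $\phi_i(y_{i+1})=\phi_i(y'_{i+1})$, so $y_i=y'_i$ for every $i$. For existence I would use a diagonal/K\"onig argument: by edge-surjectivity of $(V_n,E_n)$ pick $y^{(n)}_n$ with $(x_n,y^{(n)}_n)\in E_n$ and set $y^{(n)}_i=\phi_{n,i}(y^{(n)}_n)$ for $i<n$; since each $V_i$ is finite, a diagonal subsequence stabilizes in every coordinate, yielding $y\in V_\G$ with $\phi_i(y_{i+1})=y_i$ and $(x_i,y_i)\in E_i$ for all $i$. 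The symmetric statement (uniqueness and existence of predecessors) follows from the backward half of bidirectionality combined with edge-surjectivity, so the assignment $T\colon V_\G\to V_\G$ given by $T(x)=y$ is a well-defined bijection.

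Continuity comes essentially for free: by bidirectionality the value $\phi_i(v)$ is the same for every $v$ with $(x_{i+1},v)\in E_{i+1}$, so $T(x)_i=\phi_i(T(x)_{i+1})$ depends only on $x_{i+1}$; in particular if $x,x'\in V_\G$ agree through coordinate $i+1$ then $T(x),T(x')$ agree through coordinate $i$. A symmetric argument shows $T^{-1}(y)_i$ depends only on $y_{i+1}$. Hence $T$ and $T^{-1}$ are continuous in the product topology, and since $V_\G$ is compact Hausdorff, $T$ is a homeomorphism. The only step where I expect anything delicate is the existence of successors and predecessors, which is really a K\"onig-style compactness argument driven by finiteness of the $V_i$; everything else is a direct unravelling of the definitions of bd-cover and inverse limit.
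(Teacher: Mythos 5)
Your proof is correct and complete. The paper does not prove this lemma itself --- it is quoted from \cite[Lemma~3.5]{Shim2} --- and your argument (realizing $V_\G$ as a closed subspace of a product of finite discrete spaces; uniqueness of successors and predecessors from the two halves of bidirectionality; existence from edge-surjectivity plus a K\"onig-type compactness argument; continuity of $T$ and $T^{-1}$ because $T(x)_i$ depends only on $x_{i+1}$) is precisely the standard proof of that cited result, with no gaps.
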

\subsection{Coverings of Gambaudo-Martens type}\label{subs:GMcover}
 A description of all (not necessarily invertible) minimal zero-dimensional dynamical systems in terms of inverse limits of graph covers of particular type was given by Gambaudo and Martens \cite{GambaudoMartens06}. Following Shimomura  \cite{ShimomuraMinimal} we refer to them as coverings of \emph{Gambaudo-Martens type} (GM-coverings for short). Let $\mathcal G$ be a sequence of graph covers $(\varphi_i)_{i=0}^\infty$, where $\varphi_i : (V_i,E_i) \to (V_{i-1}, E_{i-1})$ is a graph homomorphism between graphs $G_i$ and $G_{i-1}$. The graph $G_0$ is a graph consisting of one special vertex $v_0$ and one special edge $(v_0,v_0)$. In the following definition each graph $G_i$ consists of a special vertex $v_{i,0}$, a special edge $(v_{i,0}, v_{i,0})$ and a finite number of cycles $c_{i,j}$ which start and end in $v_{i,0}$ and which have the property that once they meet at one vertex, they coincide until they reach the vertex $v_{i,0}$. Strictly speaking, a sequence of graph covers $(\varphi_{i})_{i=1}^\infty$ is a \emph{GM-covering} if the following conditions are satisfied:
\begin{enumerate}
\item the cycle $c_{i,j}$ can be written as $v_{i,0} = v_{i,j,0}, v_{i,j,1}, v_{i,j,2}, \ldots , v_{i,j,l(i,j) }= v_{i,0}$ with the length $l( i,j)\geq1$,
\item $\bigcup_{j=1}^{r_i} E(c_{i,j})= E(G_i)$, where $r_i$ denotes the number of cycles $c_{i,j}$ in $G_i$,
\item $v_{i,j,k} = v_{i,\bar j, \bar k}$ for some $j, \bar j \geq 1$ implies $v_{i,j,k+m} = v_{i,\bar j, \bar k+m}$ for every $m = 1,2,3, \ldots , l(i,j)-k$,
\item $\varphi_i (v_{i,0}) = v_{i-1,0}$ for every $i\geq 1$,
\item $\varphi_i (v_{i,j,1}) = v_{i-1, 1, 1}$ for $i\geq 1$ and $j= 1,2,\ldots , r_i$.
\end{enumerate}
A GM-covering is called \emph{simple} if, for $i\geq 1$, there exists $m>i$ such that
$$
E(\varphi_{m,i} (c_{m,j})) = E(G_i),
$$
for each cycle $c_{m,j}$ in $G_m$. By telescoping, we can assume even that $m=i+1$, i.e.
$$
E(\varphi_{i+1} (c_{i+1,j}) ) = E (G_{i}),
$$
for every $i\geq 1$ and every cycle $c_{i+1,j}$ in $G_{i+1}$.

In what follows we will need the following fact from \cite{GambaudoMartens06}.

\begin{lem}\label{lem:GambaudoMartens} A zero-dimensional dynamical system is minimal if and only if it can be represented as the inverse limit of a simple GM-covering.
\end{lem}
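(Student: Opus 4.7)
The plan is to prove the two implications using Kakutani--Rokhlin (K-R) partitions as the bridge between minimal zero-dimensional systems and GM-graphs. For the easy forward direction, let $(X,T)=(V_\G,E_\G)$ be the inverse limit of a simple GM-covering $\G=\seq{\varphi_i}$, which gives a zero-dimensional system by Lemma~\ref{lem:Tg}. A basis for the topology consists of cylinders $\varphi_{\infty,i}^{-1}(v)$ for $v\in V_i$, so it suffices to show that for every $x\in X$, $i\in\N$, and $v\in V_i$ there exists $n\geq 0$ with $\varphi_{\infty,i}(T^n x)=v$. After telescoping, simplicity gives $E(\varphi_{i+1}(c_{i+1,j}))=E(G_i)$ for every cycle $c_{i+1,j}$ of $G_{i+1}$. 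Since the GM-structure forces the level-$(i+1)$ orbit of $x$ to traverse the nontrivial cycles $c_{i+1,j}$ one after another (returning to $v_{i+1,0}$ between them), its projection to $V_i$ covers every edge of $G_i$, hence every vertex.

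For the converse, let $(X,T)$ be minimal zero-dimensional. Fix a point $x_0\in X$ and a nested sequence of clopen neighborhoods $X\supset B_1\supset B_2\supset\cdots$ with $\bigcap B_i=\{x_0\}$. By minimality, the first-return time to $B_i$ takes finitely many values $l(i,1),\ldots,l(i,r_i)$ on a clopen partition $A_{i,1},\ldots,A_{i,r_i}$ of $B_i$, yielding a K-R partition $\mathcal{P}_i=\{T^k(A_{i,j}):1\leq j\leq r_i,\ 0\leq k<l(i,j)\}$ of $X$. I encode $\mathcal{P}_i$ as a graph $G_i$ whose special vertex $v_{i,0}$ represents $B_i$ and whose cycles $c_{i,j}$ are the towers over $A_{i,j}$; condition~(5) is arranged by labeling the tower containing $x_0$ first. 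The inclusion $B_{i+1}\subset B_i$ induces a refinement of partitions which in turn defines the bd-cover $\varphi_{i+1}$, and simultaneously refining against an auxiliary clopen sequence separating points ensures the inverse-limit cylinders agree with the levels of $\mathcal{P}_i$, so the inverse limit is conjugate to $(X,T)$.

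The main obstacle is securing the simplicity condition alongside these other constraints. After fixing $\mathcal{P}_i$, by minimality and compactness there is $N$ such that every orbit segment of length $N$ visits every element of $\mathcal{P}_i$; I would then choose $B_{i+1}\subset B_i$ small enough that the minimum first-return time to $B_{i+1}$ exceeds $N$. Each tower of $\mathcal{P}_{i+1}$ necessarily sweeps through every level of every tower of $\mathcal{P}_i$, which translates into $E(\varphi_{i+1}(c_{i+1,j}))=E(G_i)$ for all $j$. The uniform boundedness of first-return times to a fixed clopen set in a minimal system is what makes this calibration possible. Reconciling the formal self-loop $(v_{i,0},v_{i,0})$ and conditions~(1)--(5) with the K-R combinatorics is a bookkeeping technicality that does not affect the essential argument.
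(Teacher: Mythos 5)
The paper offers no proof of this lemma at all---it is imported wholesale from \cite{GambaudoMartens06}---so there is nothing internal to compare against; your Kakutani--Rokhlin route is in fact the standard proof of the cited result, and for \emph{invertible} systems your outline is essentially complete. The forward direction is sound: condition (3) makes the level-$(i+1)$ walk deterministic away from $v_{i+1,0}$, so between consecutive visits to the special vertex the itinerary traces a full cycle $c_{i+1,j}$, and simplicity together with edge-surjectivity converts ``the projection covers $E(G_i)$'' into ``the orbit meets every cylinder $\varphi_{\infty,i}^{-1}(v)$''. Your calibration of $B_{i+1}$ by uniform recurrence (boundedness of return times to a fixed clopen set in a minimal system) is exactly the right mechanism for securing simplicity, and because your $\mathcal{P}_i$ retains the bases $A_{i,j}$ as separate partition elements, visiting every element of $\mathcal{P}_i$ really does force every edge of $G_i$: the only branching in the graph occurs at $v_{i,0}$, and each outgoing branch is witnessed by a visit to the corresponding $A_{i,j}$.

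The genuine gap is invertibility. The lemma is stated for zero-dimensional dynamical systems and is applied in this paper to minimal Cantor systems that are explicitly allowed to be noninvertible, but your towers $\{T^k(A_{i,j})\}$ and the first-return decomposition of $B_i$ presuppose a homeomorphism: for a non-injective $T$ the forward images $T^k(A_{i,j})$ need not be clopen, levels of distinct towers need not be disjoint, and the union of the levels need not partition $X$, so neither the graph $G_i$ nor the cover $\varphi_{i+1}$ is actually defined by your recipe. The construction in \cite{GambaudoMartens06} avoids this by building everything from preimages---the first-entrance sets $T^{-k}(B_i)\setminus\bigcup_{l<k}T^{-l}(B_i)$ and the itinerary partition they generate, which are clopen for any continuous map---and reading the cycles off the entrance itineraries rather than off forward images. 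Relatedly, in the forward direction you invoke Lemma~\ref{lem:Tg}, which is stated for \emph{bidirectional} covers; GM-coverings of noninvertible systems are only positively directed, so one needs the (true, but different) statement that such coverings induce a continuous surjection rather than a homeomorphism. In short: for homeomorphisms your argument is correct modulo bookkeeping, but to reach the generality in which the paper actually uses the lemma, the tower construction has to be rebuilt on preimages.
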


\section{Proof of Theorem \ref{Th:LRS}}\label{sec:odometers}
\begin{proof}[Proof of Theorem \ref{Th:LRS}]
By Lemma \ref{lem:GambaudoMartens}, the minimal Cantor system $(C,T)$ can be represented as the inverse limit of GM-cover $(\varphi_i)$. In the proof below we follow notation of GM-covering from Section \ref{subs:GMcover}. Let $s_m$ denote the number of vertices in the graph $G_m$ and $r_m$ denote the number of cycles defining the graph $G_m$.
By telescoping we can assume that $s_{m+1} >4s_m^2$ for every $m$.

We denote by $W^\prime_m$ the set of vertices $\{v_{m, j, l(m-1,1)}\}$. And by $W_m$ we denote the subset of $W^\prime_m$ such that each path between $w\in W_m$ and $v_{m,0}$ contains no other point from $W^\prime_m$ but each cycle in $G_m$ contains at least one of them.
By condition (5) of the definition of GM covering $\varphi_{m}(c_{m,j})$ covers $V(G_{m-1})$ and starts by the cycle $c_{m-1,1}$.
Consequently after $l(m-1,1)$ steps in $\varphi_{m}(c_{m,j})$ one gets back to the special vertex $v_{m-1,0}$.
This immediately implies that $\varphi_{m}(w)=v_{m-1,0}$ for each $w\in W_m$.

Since each path from $W_m$ to $W_m$ without any inner vertex in $W_m$ does not have cycles,
we can assign index to each vertex of $G_m$, say $V_m=\{w^{m}_1,\ldots, w^m_{s_m}\}$ in such a way that if we take any  path $v_0,\ldots, v_k$ in $G_m$ without vertices in $W_m$ and $v_0=w^m_i$, $v_k=w^m_j$
then $i<j$. We may additionally assume that minimal indexes are in $W_m$, that is if $w^m_i\in W_m$ and $w^m_j\not\in W_m$ then $i<j$.

For technical reasons we put $s_{0}=1$,
put 
$a_0=\frac{1}{6}$, $b_0=2^{-2s^2_0}a_0$ and $A_i^{(1)}=[i,i+3a_0]$ for $i=1,\ldots, s_1$.
For each $m>0$ 
we will define the function $\psi_m\colon V_m\to (0,1)$ by putting
$$
\psi_m(w)=2^{-2s_{m+1}^2-is_{m+1}}a_{m-1},\qquad \text{ when }w=w^m_i.
$$
Note that at this point $\psi_1$ is defined.

For $i=1,\ldots, s_1$ let $D_i^{(1)}\subset A_i^{(1)}$ be an interval of length $\psi_1(w^1_i)$ placed in the middle of $A_i^{(1)}$, that is $ A_i^{(1)}\setminus D_i^{(1)}$
has two connected components which are intervals of equal length.

Suppose sets $D_i^{(k)}$, $A_i^{(k)}$ are defined for $k=1,\ldots, m$ and $i=1,\ldots, s_k$.
We put $a_{m}=\max_{i} \diam A_i^{(m)}/3$ and $b_{m}=2^{-s_{{m+1}}^2}a_{m}$.

Divide each $D_i^{(m)}$ into $s_{m+1}$ intervals of equal length and disjoint interiors.
For each vertex $w=w^{m+1}_i\in V_{m+1}$ we assign uniquely one of those intervals in $D_{r}^{(m)}$, where $r$ is such that $\varphi_{m+1}(w)=w^m_r$. Since for every $v\in V_{m}$ we have $|\varphi_{m+1}^{-1}(v)|\leq s_{m+1}$
we are able to assign pairwise distinct intervals to vertices. We name interval assigned to $w^{m+1}_i$ as $A^{(m+1)}_i$ and
let $D_i^{(m+1)}\subset A_i^{(m+1)}$ be an interval of length $\psi_{m+1}(w_i^{m+1})$ placed in the middle of $A_i^{(m+1)}$, that is $ A_i^{(m+1)}\setminus D_i^{(m+1)}$
has two connected components which are intervals of equal length. Such an inclusion is possible, because
\begin{eqnarray}
\diam A_i^{(m+1)}&=&\diam D_i^{(m)}/s_{m+1}\geq 2^{-2s_{m+1}^2-s_{m+1}^2-s_{m+1}}a_{m-1}\nonumber \\
&\geq& 2^{-4s_{m+1}^2}a_{m-1}>  2^{-2s_{m+2}}a_{m-1}\geq 3 \cdot 2^{-2s_{m+2}}a_{m}\label{eq:hole:size}
\end{eqnarray}
and $\psi_{m+1}(w_i^{m+1})< 2^{-2s_{m+2}}a_{m}$.
It also implies that the diameter of each connected component of $ A_i^{(m+1)}\setminus D_i^{(m+1)}$ is greater than $D_i^{(m+1)}$.

For any $z\in V_\G$ the intersection $\bigcap_{m} D_{z_m}^{(m)}$ is a single point $x_z$, because
$$
\diam D_{z_m}^{(m)}\leq 2^{-s_m}a_{m}\leq 2^{-m},
$$
for any $m$. Furthermore, if $y,z\in V_\G$ and $y_m\neq z_m$ then $x_z\in D_{y_m}^{(m)}\subset \Int A_{y_m}^{(m)}$
and $x_y\in D_{z_m}^{(m)}\subset \Int A_{z_m}^{(m)}$ and these sets are disjoint.
This shows that the map $\pi \colon V_\G \ni z\mapsto x_z\in \R$ is well defined, continuous and injective.
Denote $X=\pi(V_\G)$. Then $\pi \colon V_\G \to X$ is a homeomorphism and $X$ is a Cantor set.
Define $f=\pi \circ T_\G\circ \pi^{-1}$. We are going to show that $f'(x)=0$ for every $x\in X$.

Fix any $x\in X$, any sequence $x_n\to x$ and let $z=\pi^{-1}(x)$ and $z^{(n)}=\pi^{-1}(x_n)$. We may assume that $x\neq x_n$ for every $n$,
hence there exists a sequence $j_n$ such that $z^{(n)}_{j_n}=z_{j_n}$ and $z^{(n)}_{j_n+1}\neq z_{j_n+1}$. Observe that by the definition of the function $\varphi_m$ and the set $W_m$ (recall that $\varphi_{m-1}(W_m)=v_{m-1,0}$) there exists at most one $n$ such that $z_n\in W_n$.
Therefore, if $n$ is sufficiently large then there are $r_n$ and $t_n\geq r_n+1$ such that
\begin{eqnarray*}
	\diam D_{z_n}^{(n)}&=&2^{-2s_{n+1}^2-r_n s_{n+1}}a_{n-1},\\
	\diam D_{T_\G(z)_n}^{(n)}&=&2^{-2s_{n+1}^2-t_n s_{n+1}}a_{n-1}.
\end{eqnarray*}
If $p,q\in X$ are such that $p\in  D_{r}^{(n)}$ and $q\in D_{s}^{(n)}$, and $r\neq s$, then by \eqref{eq:hole:size} we have
$$
|p-q| \geq \frac{\diam A^{(n+1)}_r - \diam D^{(n+1)}_r} {2} > \diam A^{(n+1)}_r/3. 
$$
By the above estimates we obtain
\begin{eqnarray*}
\frac{|f(z)-f(z_n)|}{|z-z_n|}&\leq& \frac{\diam D^{(j_n)}_{(T_\G(z))_{j_n}}}{\diam A_{z_{j_n+1}}^{(j_n+1)}/3}\leq \frac{3\cdot {2^{-s_{j_n+1}}} \diam D_{z_{j_n}}^{(j_n)}}{{\diam A_{z_{j_n+1}}^{(j_n+1)}}}\\
&\leq &\frac{3\cdot s_{j_n +1}\cdot \diam A_{z_{j_n+1 }}^{(j_n+1)}}{{2^{s_{j_n+1}}} {\diam A_{z_{j_n+1}}^{(j_n+1)}}} = \frac {3\cdot s_{j_n +1}}{2^{s_{j_n+1}}}
\longrightarrow 0.
\end{eqnarray*}
Indeed $f'(z)=0$ completing the proof.
\end{proof}

\section{Proof of Theorem \ref{infinite}}
\begin{proof}[Proof of Theorem \ref{infinite}]
The proof is based on a construction from \cite[Theorem 5.1]{BKO}. Take any point $z\in C$.
Since $(C,T)$ is a Cantor l.r.s. system, there exists a nested sequence of closed-open neighborhoods $U_n$ of the point $z$, such that $d(T(z),T(x))<d(z,x)$ for every $x\in U_1$, $x\neq z$, and $\bigcap_n U_n=\{z\}$. Since $(C,T)$ acts minimally and aperiodically, going to a subsequence (i.e. removing some of the sets $U_n$) if necessary, we can find
an increasing sequence of integers $0\leq s_n<k_n$ such that:
\begin{enumerate}[(i)]
\item	$T^{-i}(z)\not\in U_n$, for $s_n< i < k_n$, and
\item $T^{-k_n}(z),T^{-s_n}(z)\in U_n$,
\item $T^{-i}(z)\not\in U_{n+1}$, for $0< i \leq k_n$,
\item $d(z,T^{-s_n}(z))>d(z,T^{-k_n}(z))$.
\end{enumerate}
Observe that the set $U_{n}\setminus U_{n+1}$ is closed
for every $n$, and consequently the following number
\begin{equation}
0<a_n < \min_{y\in U_n\setminus U_{n+1}}d(z,y)-d(T(z),T(y))\label{eq:lrcest}
\end{equation}
exists. We may assume that $a_{n+1}<a_n/2$ and $a_1<1/2$.

For each $n\geq 1$ and $j=0,1,\ldots, k_n-s_n-1$ let us denote
$$
z^{n}_j=(T^{j-k_n+1}(z),-1+\frac{k_n-s_n-1-j}{2(k_{n}-s_{n}-1)}(a_n+a_{n+1})+a_{n}/2).
$$

Next, we denote $Z=(C\times \{-1\})\cup \bigcup_{n\geq 1, 0\leq j<k_n-s_n}\{z^n_j\}$ and we endow $Z$ with metric $\rho((x,q),(x',q'))=d(x,x')+|q-q'|$.
Note that $\lim_{n\to \infty}a_n=0$ and so $Z$ is compact.
Define $F(x,-1)=(T(x),-1)$ for every $x\in C$ and $F(z^n_j)=z^n_{j+1 (\text{mod } (k_n-s_n))}$.
It is not hard to see that $F$ is continuous.
It remains to show that $F$ is an l.r.s. map. The points $z_n^j$ are isolated in $Z$, so $F$ is l.r.s. at each of them.
Now fix any $y\in C\setminus \{z\}$. There is a closed-open set $V_y\subset C\setminus \{z\}$ such that $d(x,y)>d(T(x),T(y))$ for any $x\in V_y$. There is $m$ such that
$U_m\cap V_y=\emptyset$ and so $V_y\times [-1,-1+a_m/2]$ does not contain any of the points $z^n_{k_n-s_n-1}$.
But then for $(x,a)\in V_y\times [-1,-1+a_m/2]$, we have $F(x,a)=(T(x),b)$ where $a+1>b+1$ and so
\begin{eqnarray*}
\rho(F(y,-1),F(x,a))&=&d(T(y),T(x))+b+1\\
&<&d(y,x)+a+1=\rho((y,-1),(x,a)).
\end{eqnarray*}
The same calculation holds for $(z,-1)$ in pair with $z^n_j$, provided that $j<k_n-s_n-1$, and $z^n_j\in U_1\times \R$.
Finally, observe that by \eqref{eq:lrcest} we obtain
\begin{eqnarray*}
	\rho(F(z,-1),F(z_{k_n-s_n-1}^n))&=&\rho(F(z,-1),z_0^n)=d(T(z),T(T^{-k_n}(z)))+a_n+\frac{a_{n+1}}{2}\\
	&<&d(z,T^{-k_n}(z))+\frac{a_{n+1}}{2}<d(z,T^{-s_n}(z))+\frac{a_{n}}{2}\\
	&=&\rho((z,-1),z^n_{k_n-s_n-1}).
\end{eqnarray*}
The proof that $(Z,F)$ is an l.r.s. system is completed. Since we have infinitely many periodic points, the set of periods is unbounded by Lemma~12 in \cite{CJ}.
\end{proof}
\section*{Acknowledgements}
This work was supported by the subsidy for institutional development IRP201824 ”Complex topological structures” from University of Ostrava. J. Boro\'nski was supported by National Science Centre, Poland (NCN), grant no. 2015/19/D/ST1/01184.


\begin{thebibliography}{99}

\bibitem{blokh2} {\sc Blokh, A.} \emph{The spectral decomposition for
one-dimensional maps}, {\bf Dynamics reported, Expositions Dynam.
Systems (N.S.) 4} (1995), 1--59.

\bibitem{BKO} {\sc Boro\'nski, J.; Kupka, J.; Oprocha, P.} \emph{Enderi's Conjecture Revisited}, {\bf Ann. Henri Poincar\'e 19} (2018), 267--281.

\bibitem{Jar} {\sc Ciesielski, K. C.;  Seoane-Sep\'ulveda, J} \textit{Differentiability versus continuity: restriction and extension theorems and monstrous examples.}
{\bf Bull. Amer. Math. Soc. (N.S.) 56} (2019), no. 2, 211--260.

\bibitem{CJ} {\sc Ciesielski, K.C.; Jasinski, J.} \textit{An auto-homeomorphism of a Cantor set with derivative zero everywhere.} {\bf J. Math. Anal. Appl. 434} (2016), 1267--1280.

\bibitem{CJREA} {\sc Ciesielski, K.C.; Jasinski, J.},  \textit{Smooth Peano functions for perfect subsets of the real line}. \textbf{Real Anal. Exchange} 39 (2013/14), no. 1, 57--72.

\bibitem{D1} {\sc Downarowicz, T.}, \textit{The Choquet simplex of invariant measures for minimal flows}. \textbf{Israel J. Math.} 74 (1991), no. 2-3, 241--256.

\bibitem{Downar} {\sc Downarowicz, T.}, \textit{Survey of odometers and Toeplitz flows}. Algebraic and topological dynamics, 7--37, \textbf{Contemp. Math.,} 385, Amer. Math. Soc., Providence, RI, 2005.

\bibitem{DL} {\sc Downarowicz, T.; Lacroix, Y.}, \textit{A non-regular Toeplitz flow with preset pure point spectrum}. \textbf{Studia Math.} 120 (1996), no. 3, 235--246.

\bibitem{DS} {\sc Downarowicz, T.; Serafin, J.}, \textit{Possible entropy functions}, \textbf{Israel J.} 135 (2003),
221–250.

\bibitem{Good}  {\sc Fernández, L.; Good, C.; Puljiz, M.} {\em Almost minimal systems and periodicity in hyperspaces.} {\bf Ergodic Theory Dynam. Systems 38} (2018), 2158--2179.

\bibitem{GambaudoMartens06} {\sc Gambaudo, J.-M.; Martens, M.} \emph{Algebraic Topology for Minimal Cantor Sets}, {\bf Ann. Henri Poincar\'e  7} (2006), 423--446.

\bibitem{Gril} {\sc Grillenberger, C.}, \textit{Constructions of strictly ergodic systems. I. Given entropy.} \textbf{Z. Wahrscheinlichkeitstheorie und Verw. Gebiete} 25 (1972/73), 323--334.

\bibitem{Lehrer} {\sc Lehrer,E.} {\em Topological mixing and uniquely ergodic systems.} {\bf Israel J. Math. 57}, (1987), 239--255.

\bibitem{Ito} {\sc Ito, S. } {\em An estimate from above for the entropy and the topological entropy of a $C^{1}$-diffeomorphism.} {\bf Proc. Japan Acad. 46}  (1970), 226--230.

\bibitem{Shim2} {\sc Shimomura, T.} \emph{Special homeomorphisms and approximation for Cantor systems},
{\bf Topol. Appl. 161} (2014), 178--195.

\bibitem{Shi}{\sc Shimomura, T.} \emph{Graph covers and ergodicity for zero-dimensional systems}, {\bf Ergodic
Theory Dynam. Systems  36} (2016), 608--631.

\bibitem{ShimomuraMinimal} {\sc Shimomura, T.} \emph{Topological rank does not increase by natural extensions of Cantor minimals}, {\bf Kyushu J. Math.  71} (2017), 299--309.


\end{thebibliography}
\end{document}